\def\hpq0{h^{p,q}_{\leq 0}}
\def\Hpq0{\H_{\leq 0}^{p,q}}
\def\dbar{\bar\partial}
\def\ddbar{\partial\dbar}
\def\R{{\mathbb R}}
\def\C{{\mathbb C}}
\def\H{{\mathcal H}}
\def\Re{{\rm Re\,  }}
\def\be{\begin{equation}}
\def\ee{\end{equation}}
\newtheorem{thm}{Theorem}[section]
\newtheorem{cor}[thm]{Corollary}
\newtheorem{prop}[thm]{Proposition}
\theoremstyle{definition}
\newtheorem{df}{Definition}
\theoremstyle{remark}
\newtheorem{preremark}{Remark}
\newtheorem{preex}{Example}
\numberwithin{equation}{section}
\begin{document}

\begin{abstract} We give a general inequality for Bergman kernels of Bergman spaces defined by convex weights in $\C^n$. We also discuss how this can be used in Nazarov's proof of the Bourgain-Milman theorem, as a substitute for H\"ormander's estimates for the $\dbar$-equation.
\end{abstract}

\title[]
{ Bergman kernels for Paley-Wiener  spaces and Nazarov's proof of the Bourgain-Milman theorem.}

\author[]{ Bo Berndtsson}

\bigskip

\maketitle

\section{Introduction}
If $\phi$ is a plurisubharmonic function in a domain $D$ in $\C^n$, the Bergman space $A^2(D, e^{-\phi})$ is the Hilbert space of holomorphic functions in $D$ that are square integrable against the weight $e^{-\phi}$;
$$
A^2=A^2(D,e^{-\phi})=\{f\in H(D); \int_D |f|^2 e^{-\phi} d\lambda<\infty\}.
$$
The (diagonal) Bergman kernel for $A^2$ is the function
$$
B(z):= \sup_{f\in A^2} |f(z)|^2/\|f\|^2.
$$
In this note we are mainly interest in the case when $\phi$ is convex and $D=\C^n$. However, we do allow our convex functions to attain the value $+\infty$, and in that case we are in effect only integrating over the set where $\phi$ is finite, and we only require $f$ to be holomorphic in the interior of that set. With this understanding we omit the reference to the domain in the Bergman space, and write $A^2=A^2(e^{-\phi})$. Our main result is as follows.
\begin{thm} Let $\phi_1$ and $\phi_2$ be convex functions on $\R^n$ and let $B$ be the Bergman kernel for the space $A^2(e^{-\phi_1(x)-\phi_2(x)})$. Let also $B'$ be the Bergman kernel for the space $A^2(e^{-\phi_1(x)-\phi_2(y)})$.
Then, if $\phi_1$ and $\phi_2$ are symmetric ($\phi_j(-x)=\phi_j(x)$), 
$$
B'(0)\leq C^n B(0),
$$
where
$$
 \log C= \frac{1}{\pi}\int_{-1}^1\frac{-\log s^2}{1+s^2} ds.
  $$

   If $\phi_1=\phi_2=\phi$, we may take $C=2$ if $\phi$ is homogenous of order 2, and $C=1.604$ if $\phi$ is homogenous of order 1.
  \end{thm}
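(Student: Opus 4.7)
The plan is to prove the dual form of the inequality: for every entire $g$ with $g(0)=1$, construct an entire $f$ with $f(0)=1$ and $\|f\|_1^2\le C^n\|g\|_2^2$, where $\|\cdot\|_1$ and $\|\cdot\|_2$ are the norms of $A^2(e^{-\phi_1(x)-\phi_2(x)})$ and $A^2(e^{-\phi_1(x)-\phi_2(y)})$ respectively. Since $1/B(0)$ and $1/B'(0)$ are the infima of $\|f\|_1^2$ and $\|g\|_2^2$ over such normalized functions, this gives $B'(0)\le C^nB(0)$ at once.

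The first Ansatz I would try is $f(z)=g(z)\,h(z)$ for an entire $h$ with $h(0)=1$, reducing the problem to finding $h$ with $|h(z)|^2\le C^n\,e^{\phi_2(x)-\phi_2(y)}$, either pointwise or in an integrated sense. In the quadratic case $\phi_2(x)=\tfrac12\langle Ax,x\rangle$, the difference $\phi_2(x)-\phi_2(y)$ is exactly $\tfrac12\operatorname{Re}\langle Az,z\rangle$, so $h(z)=e^{\langle Az,z\rangle/4}$ achieves pointwise equality with $C=1$, and $f=g\cdot h$ satisfies $\|f\|_1=\|g\|_2$. This explains why homogeneous order-2 weights are effectively easy and serves as the model for the general construction.

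For general symmetric convex $\phi_2$ the difference $\phi_2(x)-\phi_2(y)$ is not pluriharmonic, so no pointwise multiplier works. I would then replace the pointwise Ansatz by a weighted superposition of quadratic (or Gaussian) multipliers, parametrized by the Legendre data of $\phi_2$ through the representation $\phi_2=\sup_a(\langle a,\cdot\rangle-\phi_2^*(a))$; a Jensen or Cauchy--Schwarz step applied to the superposition, together with the symmetry of $\phi_2$, should produce the integrated bound. The factor $C^n$ arises by tensorization in the eigendirections of the Hessian, so the estimate of $C$ is essentially one-dimensional. The specific value
\[
\log C=\frac{1}{\pi}\int_{-1}^{1}\frac{-\log s^2}{1+s^2}\,ds
\]
is recognizable as the Cauchy--Poisson integral of $-\log s^2$ on $[-1,1]$ evaluated at $i$, which strongly suggests that the constant emerges from a harmonic-majorant argument for $\log|h|^2$ on a disk in an auxiliary complex variable. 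The improved constants $C=2$ for order $2$ and $C\approx 1.604$ for order $1$ should come from specializing this majorant under the scaling symmetry available for homogeneous $\phi$.

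The main obstacle will be this averaging step --- choosing the superposition so that the multiplier $h$ is entire with $h(0)=1$, so that the integrated inequality produces the stated constant rather than a larger one, and so that the construction is uniform in the convex datum $\phi_2$. Passing cleanly from the one-dimensional inequality to the $n$-dimensional bound $C^n$ in the non-separable case is also delicate and will probably require a fiberwise or approximation argument.
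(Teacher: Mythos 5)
Your reduction to finding a normalized competitor $f$ with $\|f\|_1^2\le C^n\|g\|_2^2$ is fine, and your observation that the constant is the Poisson integral of $-\log s^2$ at the point $i$ correctly identifies the analytic mechanism. But the proposal stalls exactly at its central step, and I do not see how to rescue it in the form you describe. A pointwise multiplier $h$ with $|h(z)|^2\le C^n e^{\phi_2(x)-\phi_2(y)}$ cannot exist in the case that matters most, $\phi_2=I_K$: the right-hand side vanishes on the nonempty open set $\left\{x\in\mathrm{int}\,K,\ y\notin K\right\}$, forcing $h\equiv 0$ there and hence everywhere, contradicting $h(0)=1$. The proposed repair --- a superposition of Gaussian multipliers indexed by Legendre data, followed by Jensen or Cauchy--Schwarz --- is not yet an argument: the representation $\phi_2=\sup_a(\langle a,\cdot\rangle-\phi_2^*(a))$ is a supremum, not an integral, and converting it into an average costs dimension-dependent factors you have not controlled; moreover ``tensorization in the eigendirections of the Hessian'' has no meaning for a general convex $\phi$, which need not be smooth, separable, or even finite. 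So both the factor $C^n$ and the reduction to one dimension are asserted rather than derived.

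The idea you are missing is to apply the harmonic-majorant argument not to a multiplier but to the Bergman kernel itself, along a holomorphic family of weights. Set $\|f\|_\zeta^2=\int|f|^2e^{-\phi_1(\Re(\zeta z))-\phi_2(x)}\,d\lambda$ for $\zeta\in\C$; by the symmetry of $\phi_1$, the value $\zeta=1$ gives $B$ and $\zeta=i$ gives $B'$. Since $\phi_1(\Re(\zeta z))+\phi_2(x)$ is plurisubharmonic in $(\zeta,z)$, the theorem on plurisubharmonic variation of Bergman kernels (\cite{Berndtsson2}) shows that $b(\zeta)=\log B_\zeta(0)$ is subharmonic in $\zeta$. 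The change of variables $z\mapsto z/\zeta$ shows that $b(\zeta)-n\log|\zeta|^2$ is bounded near infinity --- this Jacobian, not tensorization, is where the exponent $n$ comes from --- and the monotonicity of $\phi_1(sx)$ in $|s|$ gives $b(s)\le b(1)$ for $|s|\le 1$ and $b(s)\le b(1)+n\log s^2$ for $|s|\ge 1$. The Poisson representation in the upper half plane, evaluated at $\zeta=i$, then yields exactly $b(i)\le b(1)+n\log C$ with your integral. Your quadratic-case computation survives as a sanity check: there $b(\zeta)=n(\log(1+|\zeta|^2)-\log 2)+b(1)$, and the only loss is the Green-potential term discarded in the Poisson step. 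The homogeneous cases are handled by computing $b(s)$ exactly on the real axis before applying the same representation.
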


The point of the theorem is that the spaces   $A_0:=A^2(e^{-\phi_1(x)-\phi_2(x)})$  and 
$A_1:=A^2(e^{-\phi_1(x)-\phi_2(y)})$, are  rather different and it is not a priori clear that their Bergman kernels should be in any way related. While $A_0$ is defined by a weight function that only depends on $x$, the weight in the definition of $A_1$ decreases in all directions. Thus, e. g., the function $f=1$ lies in $A_1$ but not in $A_0$. Using that function in the definition of the Bergman kernel we get immediately
$$
B'(0)\geq (\int e^{-\phi_1} \int e^{-\phi_2})^{-1},
$$
and the theorem implies that the same estimate holds for $B(0)$, up to a constant:
\begin{cor} With the notation of the theorem
\be
B(0)\geq C^{-n} (\int e^{-\phi_1} \int e^{-\phi_2})^{-1}
\ee
\end{cor}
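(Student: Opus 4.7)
The plan is to apply the theorem to the trivial lower bound on $B'(0)$ that the author already highlights in the paragraph preceding the corollary, namely the one obtained by testing against $f\equiv 1$.

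First, I would note that since $\phi_1$ depends only on $x=\Re z$ and $\phi_2$ only on $y=\Im z$, the weight defining $A_1 = A^2(e^{-\phi_1(x)-\phi_2(y)})$ is a tensor product, so Fubini gives
\[
\|1\|_{A_1}^2 = \int_{\R^n} e^{-\phi_1(x)}\,dx \cdot \int_{\R^n} e^{-\phi_2(y)}\,dy.
\]
If this product is infinite, the right-hand side of the corollary is zero and the estimate is trivial, so I may assume finiteness. Then $1\in A_1$ and the variational definition of the Bergman kernel immediately yields
\[
B'(0) \geq \frac{1}{\|1\|_{A_1}^2} = \paren{\int e^{-\phi_1}\int e^{-\phi_2}}^{-1}.
\]

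Combining this with the theorem, which asserts $B'(0)\leq C^n B(0)$ under the symmetry hypothesis, I obtain
\[
B(0) \geq C^{-n} B'(0) \geq C^{-n}\paren{\int e^{-\phi_1}\int e^{-\phi_2}}^{-1},
\]
which is the corollary. There is no real obstacle: all the work is already in the theorem, and the corollary is merely the reformulation of its conclusion against the most natural test function available in $A_1$, whose norm can be computed in closed form by separation of variables.
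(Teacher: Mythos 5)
Your proof is correct and is exactly the argument the paper intends: test the Bergman kernel of $A_1$ against $f\equiv 1$, whose squared norm factors as $\int e^{-\phi_1}\int e^{-\phi_2}$ by separation of variables, and then apply the inequality $B'(0)\leq C^n B(0)$ from the theorem. The only (harmless) addition is your explicit treatment of the case where the product of integrals is infinite.
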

Our principal motivation (or inspiration) for the theorem comes from Nazarov's proof (\cite{Nazarov}) of the Bourgain-Milman theorem, \cite{Bourgain-Milman}. Nazarov's idea was to use Bergman kernels for Paley-Wiener spaces (see section 3) to prove inequalities for volumes of convex bodies. His main result is basically what we have formulated as a corollary, in the special case when $\phi_1=\phi_2= I_K$, the indicator of a symmetric convex body (defined as zero on $K$ and infinity outside). The inequality 
(1.1) then becomes
\be
B(0)\geq C^{-n} |K|^{-2},
\ee
where $|K|$ means the volume of $K$. He then coupled this estimate with an estimate from above
\be
B(0)\leq n! C_1^n |K^\circ||K|^{-1},
\ee
where $K^\circ$ is the polar body of $K$. The result is that
$$
|K||K^\circ|\geq C_2^n/n!,
$$
with $C_2$ an absolute constant. This is the Bourgain-Milman inequality. The famous Mahler conjecture says that $C_2$ can be taken equal to $4$. This is so far open. The best result on the value of the constant so far, $C_2=\pi$, is due to Kuperberg, \cite{Kuperberg}.

Nazarov used H\"ormander's $L^2$-estimates for the $\dbar$-equation to prove (1.2). Here we will use a different argument based on plurisubharmonic variation of Bergman kernels from \cite{Berndtsson2}. The inequality (1.3) is also non trivial, but much easier than (1.2), and completely elementary. We will give the corresponding inequality for convex functions in section 4 (this is  based on the master's thesis of Hultgren, \cite{Hultgren}).

In the next section we will give the proof of Theorem 1.1. In section 3 we give some generalities on Bergman and Paley-Wiener spaces, and in section 4 we discuss estimates from above of Bergman kernels ( in both these sections, we  again largely  follow \cite{Hultgren}). In the last section we will discuss the values  of the constant one can obtain by this method -- they are somewhat worse than Kuperberg's estimate, but just barely better than Nazarov's. 

It should be said that the only new result in this note is Theorem 1.1. The rest of the material is at least essentially known, but we include it in an attempt to give an easy introduction to this very interesting circle of ideas. In a companion paper, \cite{B}, we also give a variation of Kuperberg's proof, using different ideas from complex analysis. We also refer to \cite{Blocki} for another analysis of Nazarov's proof, from a different angle. 

Finally I would like to thank Bo'az Klartag and Yanir Rubinstein for very stimulating discussions on these matters.

\section{An estimate for Bergman kernels}

We are looking at Bergman spaces on $\C^n$ defined by weight functions $e^{-\phi(x)}$ where $\phi$ is convex on $\R^n$. We may assume that $\phi(0)=0$ and  also assume that $\phi(x)=\phi(-x)$.
The main technical difficulty in Nazarov's paper is to prove that for $\phi=I_K$, the Bergman kernel at the origin is bounded from below by a constant depending on $n$  times $|K|^{-2}$. This is where H\"ormander's theorem comes in.

Here we follow  another approach to prove such things in a somewhat more general setting. Let $\phi_1(x)$ and $\phi_2(x)$ be two convex functions on $\R^n$ satisfying the hypotheses above.  We consider the Bergman space $A^2(e^{-\phi_1(x)-\phi_2(x)})$ of holomorphic functions satisfying
$$
\int |f(x+iy)|^2 e^{-\phi_1(x)-\phi_2(x)} dxdy<\infty.
$$

Let $\zeta\in \C$ and define
$$
\|f\|^2_\zeta=\int_{\C^n} |f|^2 e^{-\phi_1(\zeta z)-\phi_2(z)} d\lambda(z).
$$
Here we are using the notation $\phi_j(z)=\phi_j(\Re z)$.
Denote by $B_\zeta$ the Bergman kernel for this norm. 
When $\zeta=i$, the norm is
$$
\|f\|^2_i=\int_{\C^n} |f|^2 e^{-\phi_1(-y)-\phi_2(x)} d\lambda.
$$
The main point is that the  Bergman kernel for this norm is fairly easy to estimate from below since $f=1$ is a contender in the definition of the Bergman kernel. The theorem says that bounds from below of $B_i$ give bounds from below of $B_1$ which is the Bergman kernel for the norm defined by $\phi_1+\phi_2$.  

\begin{thm} For general convex functions $\phi_1, \phi_2$, symmetric and vanishing at the origin, we have 
  $$
  B_i(0)\leq C^n B_1(0),
  $$
  with
  $$
  \log C= \frac{1}{\pi}\int_{-1}^1\frac{-\log s^2}{1+s^2} ds.
  $$

  If $\phi_1=\phi_2=\phi$, we may take $C=2$ if $\phi$ is homogenous of order 2, and $C=1.604$ if $\phi$ is homogenous of order 1.
  \end{thm}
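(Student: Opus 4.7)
My approach is to regard $\zeta\mapsto \log B_\zeta(0)$ as a subharmonic function on $\C$, to bound it from above on the real axis using the convexity and symmetry of the weights, and then to read off the estimate at $\zeta=i$ from the Poisson representation on the upper half-plane. The integral defining $\log C$ is exactly what the Poisson kernel $\frac{1}{\pi(1+s^2)}$ at $\zeta=i$ extracts from that bound.

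First I would verify that the weight $\Phi(z,\zeta):=\phi_1(\Re(\zeta z))+\phi_2(\Re z)$ is plurisubharmonic in $(z,\zeta)\in\Cn\times\C$: each convex $\phi_j$ extends to a psh function on $\Cn$ via $z\mapsto\phi_j(\Re z)$, and $(z,\zeta)\mapsto\zeta z$ is holomorphic, so $\phi_1(\Re(\zeta z))$ is psh jointly as a composition of a psh function with a holomorphic map. The theorem on psh variation of Bergman kernels from \cite{Berndtsson2} then gives that $w(\zeta):=\log B_\zeta(0)$ is subharmonic on $\C$. The hypotheses $\phi_j(-x)=\phi_j(x)$, combined with the substitutions $z\mapsto -z$ and $z\mapsto\bar z$, yield $w(-\zeta)=w(\bar\zeta)=w(\zeta)$.

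Next I would establish the real-axis bounds. For $|s|\leq 1$, convexity and symmetry of $\phi_1$ together with $\phi_1(0)=0$ give $\phi_1(sx)\leq\phi_1(x)$, hence $\|f\|_s^2\geq\|f\|_1^2$ for every $f$, and therefore $w(s)\leq w(1)$. For $|s|\geq 1$, the rescaling $g(z):=f(z/s)$ has $g(0)=f(0)$ and, after the change of variables $z=sw$,
\[
 \|g\|_1^2=s^{2n}\int|f(w)|^2 e^{-\phi_1(s\Re w)-\phi_2(s\Re w)}d\lambda(w)\leq s^{2n}\|f\|_s^2,
\]
since $\phi_2(s\Re w)\geq\phi_2(\Re w)$ when $|s|\geq 1$. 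Supremizing over $f$ gives $w(s)\leq w(1)+n\log s^2$ on $|s|\geq 1$. Feeding both bounds into the Poisson inequality on the upper half-plane at $\zeta=i$ yields
\[
 w(i)\leq\frac{1}{\pi}\int_\R\frac{w(s)}{1+s^2}ds\leq w(1)+\frac{n}{\pi}\int_{|s|\geq 1}\frac{\log s^2}{1+s^2}ds,
\]
using $\int_\R\frac{ds}{\pi(1+s^2)}=1$; the substitution $s\mapsto 1/s$ identifies the last integral with $\frac{1}{\pi}\int_{-1}^{1}\frac{-\log s^2}{1+s^2}ds=\log C$.

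The main technical obstacle will be justifying the Poisson representation: $w$ is only upper semicontinuous, it may drop to $-\infty$ (notably near $\zeta=0$, where the weight becomes independent of $\Im z$ and the Bergman space can collapse), and its growth must be controlled at infinity for the half-plane formula to be valid. The natural remedy is to regularise by replacing $\phi_j$ with smooth, strictly convex, coercive approximants, for which $w$ is smooth and finite and Poisson applies directly, and then to pass to the limit. For the improved constants in the homogeneous symmetric cases, the weight at real $\zeta=s$ reduces to $(|s|^k+1)\phi(x)$, so a single scaling $z\mapsto z/(|s|^k+1)^{1/k}$ yields a closed-form expression for $w(s)$ on $\R$; inserting this into the same Poisson integral gives $C=2$ for $k=2$ and the stated value $C\approx 1.604$ for $k=1$ (expressible via Catalan's constant).
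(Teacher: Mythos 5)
Your proposal is correct and follows essentially the same route as the paper: subharmonicity of $\log B_\zeta(0)$ via plurisubharmonic variation of Bergman kernels, the two real-axis bounds $b(s)\leq b(1)$ for $|s|\leq 1$ and $b(s)\leq b(1)+n\log s^2$ for $|s|\geq 1$, the Poisson representation at $\zeta=i$ (the paper handles the growth at infinity by applying Poisson to $b(\zeta)-n\log|\zeta|^2$, which is the clean version of the justification you flag), and the explicit scaling in the homogeneous cases. The only cosmetic differences are that you obtain the $|s|\geq 1$ bound via the competitor $f(z/s)$ rather than a change of variables in the norm, and you identify the constant through the substitution $s\mapsto 1/s$.
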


The proof depends on two things: First, $\log B_\zeta(0)=: b(\zeta)$ is subharmonic in $\C$. This is a direct consequence of the main theorem in \cite{Berndtsson2} on plurisubharmonic variation of Bergman kernels since $\phi_1(\zeta z)+\phi_2(z)$ is plurisubharmonic in $(\zeta,z)$. Second, it satisfies an estimate
$$
b(\zeta)\leq C+n\log|\zeta|^2.
$$
The second fact comes from changing variables 
$$
\|f\|^2_\zeta=|\zeta|^{-2n}\int_{\C^n} |f(z/\zeta)|^2 e^{-\phi_1(z)-\phi_2(z/\zeta)} d\lambda(z).
$$
This gives that
$$
B_\zeta(0)=|\zeta|^{2n}\check{B}_{1/\zeta}(0),
$$
where $\check{B}$ means that we have changed the roles of $\phi_1$ and $\phi_2$. (It does not matter that $f(z)$ changes to $f(z/\zeta)$ since the origin where we study the Bergman kernel remains fixed.)  
Then we note  that $\check{B}_{1/\zeta}(0)$ is bounded as $\zeta\to\infty$; it tends to the Bergman kernel for the weight $e^{-\phi_1}$.  

This means that we can apply the Poisson representation in the upper half plane to the function $b(\zeta)-n\log|\zeta|^2$, which is bounded near infinity. It is not bounded near the origin, where it has a logarithmic pole, but it is at any rate integrable on the real axis. Therefore we have  that
$$
b(i)=\log B_i(0)\leq \frac{1}{\pi}\int_{-\infty}^\infty\frac{b(s)-n\log s^2}{1+s^2} ds.
$$
What remains is to estimate $b(s)$ for $s\in \R$, and by symmetry we need only worry about $s>0$. 

If $s\leq 1$, we have
\be
\|f\|^2_s=\int |f|^2 e^{-\phi_1(sx)-\phi_2(x)} d\lambda(z)\geq \|f\|^2_1,
\ee
  since $\phi(sx)$ is increasing in $s$. Hence $b(s)\leq b(1)$.

  If $s\geq 1$ we have
  \be
\|f\|^2_s=s^{-2n}\int |f|^2 e^{-\phi_1(x)-\phi_2(x/s)} d\lambda(z)\geq s^{-2n} \|f\|^2_1,
  \ee
  for the same reason. Hence $b(s)\leq b(1)+n\log s^2$ when $s\geq 1$.
Notice that these estimates of $b(s)$ are actually sharp if $\phi_1=\phi_2$ is the  indicator function of a convex body.

Putting these estimates together we get
  $$
  b(i)\leq \frac{1}{\pi}\int_{-1}^1\frac{b(1)-n\log s^2}{1+s^2} ds+
 \frac{1}{\pi}\int_{|s|>1}\frac{b(1)}{1+s^2} ds.
$$
 This completes the proof of the first part of the theorem.

 For the second part, when $\phi_1=\phi_2=\phi$ we have, when $\phi$ is homogenous of order 2, that
 $$
 \|f\|^2_s=\int |f|^2 e^{-\phi(sx)-\phi(x)} d\lambda(z)=\int |f|^2e^{-2\phi(\sqrt{(s^2+1)/2}     x)}d\lambda(z)=
 $$
 $$
= (\frac{2}{1+s^2})^n\int |f(z/ \sqrt{(s^2+1)/2} )|^2 e^{-2\phi(x)} d\lambda(z). 
$$
This implies that
$$
B_s(0)= (\frac{s^2+1}{2})^n B_1(0),
$$
where $B_1(0)$ is now the Bergman kernel for the weight function $e^{-2\phi}$. Hence, using the Poisson integral representation again, 
 $$
  b(i)=\log B_i(0)\leq n\frac{1}{\pi}\int_{-\infty}^\infty \frac{\log(1+s^2)-\log 2}{1+s^2}ds + b(1).
  $$
  (The term $\log s^2$ gives zero  contribution to the Poisson integral.) 

  We now use that 
  $$
  \frac{1}{\pi}\int_{-\infty}^\infty \frac{\log(1+s^2)}{1+s^2}ds =2\log 2
  $$
  (it is a standard residue integral). Hence
  $$
  b(i)\leq n\log 2+b(1),
  $$
  which is what we claim. 

Finally, if $\phi_1=\phi_2=\phi$ where $\phi$ is homogenous of order 1, we write $\phi(sx)+\phi(x)=2\phi(x(|s|+1)/2 )$. The same argument as in the 2-homogenous case gives
$$
\log C= \frac{4}{\pi}\int_0^\infty \frac{\log(1+s)}{1+s^2}ds -2\log 2.
$$
By numerical calculation (e.g. Wolfram alpha) this gives $C$ slightly less than $1.604$.
 \qed 

 It is interesting to compare the second part of the theorem to the special case when $\phi(x)= |x|^2/2$. Then
  $B_i(0)=B_1(0)$. To see this, note that
 $$
 x^2=(x^2+y^2)/2 +\Re z^2/2
 $$
 Thus, if we multiply $f$ by $e^{z^2/4}$, we change the second norm to the first. Hence the Bergman kernels are equal since  $e^{z^2/4}=1$ at the origin. I don't know if the factor $2^n$ in the theorem can be omitted in general.

 Note also that    when $\phi$ is $x^2/2$                       , then
 \be
 b(\zeta)=n(\log(1+|\zeta|^2)-\log2) +b(1),
 \ee
 not only for $\zeta$ real, but for all $\zeta$. This follows by a similar argument: Writing $\zeta=\xi+i\eta$, 
 $$
 |\Re(\zeta z)|^2= \xi^2 |x|^2+\eta^2 |y|^2-2\xi\eta x\cdot y,
 $$
 which is equal to $|\zeta|^2|x|^2$ plus the real part of a holomorphic function ( of $z$) vanishing at the origin. Hence, the same argument as above gives 1.3. (Moreover, the same formula holds for any quadratic form $\phi= x^tA^tAx$, where $A$ is a real $n\times n$ matrix; this follows by changing variables $w=Az$.)

 So, when $\phi(x)=|x|^2/2$ (or a general quadratic form), the estimate in the theorem is off by a factor $2^n$. We loose this factor in the proof when we use the Poisson representation formula. The Poisson representation is an identity for harmonic functions, but for subharmonic functions we loose the Green potential part. For $\phi$ a quadratic form, the discussion above shows that the term we loose is the Green potential of the Fubini-Study form $i\ddbar \log(1+|\zeta|^2)$.

\section{Bergman spaces, Paley-Wiener spaces and logarithmic Laplace transforms.}
If $K$ is a convex body in $\R^n$, the Paley-Wiener space associated to $K$, $PW(K)$ is the space of all entire functions in $\C^n$  of the form
$$
f(z)=\int_K e^{t\cdot z} \tilde f(t) dt,
$$
where $\tilde f$ lies in $L^2(K)$. We define the norm of $f$ to be $L^2$-norm of $\tilde f$. Then $PW(K)$ becomes a Hilbert space of entire functions. Any $f$ in the space satisfies an estimate
$$
|f(x+iy)|\leq e^{h_K(x)}\|f\|\sqrt{|K|},
$$
where 
$$
h_k(x)=\sup_{t\in K} t\cdot x,
$$
is the support function of $K$ 
and $|K|$ denotes the volume of $K$. Moreover, the restriction of $f$ to the $i\R^n$ lies in $L^2$. The Paley-Wiener theorem asserts that conversely, any such function $f$ lies in the Paley-Wiener space.

We will consider a generalization of Paley-Wiener space, associated to convex functions on $\R^n$ instead of convex bodies. If $\psi$ is a convex function on $\R^n$, we denote by $PW(e^{\psi})$ the space of all holomorphic functions of the form
\be
f(z)=\int_{\R^n} e^{t\cdot z}  \tilde f(t) dt,
\ee
where 
$$
\|f\|_{PW}^2:=\int_{\R^n} |\tilde f|^2 e^{\psi}  dt <\infty.
$$
We allow $\psi$ to attain the value $+\infty$, and the $L^2$-condition should then be interpreted so that $\tilde f$ vanishes where $\psi=\infty$. Thus the classical Paley-Wiener spaces correspond to $\psi=I_K$; the convex indicator of $K$ that is zero on $K$ and infinity on the complement of $K$. 

Another particular case of generalized Paley-Wiener spaces are  Bergman spaces, where the weight depends only on the real part of $z$. Let $\phi=\phi(x)$ be a convex function on $\R^n$, and consider the Bergman space, $A^2(e^{-\phi(x)})$,  associated to $\phi$, i.e. the space of holomorphic functions such that
\be
\|f\|^2_{A^2}:=\int|f(x+iy|^2 e^{-\phi(x)} dxdy<\infty.
\ee
Again, we allow $\phi$ to attain the value $+\infty$. We then only integrate over the tube domain
where $\phi$ is finite, and $f$ is only required to be holomorphic there. Proposition 3.1 below  says that such spaces are Paley-Wiener spaces, associated to a weight function $\psi$ that is the 'logarithmic Laplace transform' of $\phi$ (see \cite{1Bo'az} and \cite{2Bo'az} for early uses of this transform). 

\begin{df} Let $\phi$ be a convex function on $\R^n$. Then the logarithmic Laplace transform of $\phi$ is the  function $\tilde\phi$  defined by
$$
e^{\tilde\phi(t)}= \int e^{2xt-\phi(x)} dx.
$$
\end{df}

 Logarithmic Laplace transforms are clearly convex, but it is also clear that a general convex function can not be written like this. For one thing, they are always real analytic in the interior of the set where they are finite, but it is also easy to see that e.g. the indicator of a convex body is not a logarithmic Laplace transform. One can verify that the sum of two logarithmic Laplace transforms is again a logarithmic Laplace transform( of the convolution of $e^{-\phi_1}$ and $e^{-\phi_2}$; Prekopa's theorem implies that the convolution is log-concave).   Thus the set of logarithmic Laplace transforms is an additive semigroup and $2\tilde\phi$ is again in the set, but $(1/2)\tilde\phi$ is in general not. 
\begin{prop} The spaces $PW(e^{\tilde\phi})$ and $A^2(e^{-\phi})$ are equal and
$$
\|f\|^2_{A^2}=(2\pi)^n\|f\|^2_{PW}.
$$
\end{prop}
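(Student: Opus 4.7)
The plan is to derive both the equality of spaces and the norm identity from a single Plancherel-plus-Fubini computation applied to the representation (3.1).

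First, assume $f\in PW(e^{\tilde\phi})$, so $f(z)=\int e^{t\cdot z}\tilde f(t)\,dt$. For each fixed $x$ with $\phi(x)$ finite, the slice $y\mapsto f(x+iy)$ is the Fourier transform (in $y$) of $g_x(t):=e^{t\cdot x}\tilde f(t)$. By Plancherel,
$$\int_{\R^n}|f(x+iy)|^2\,dy=(2\pi)^n\int_{\R^n}|\tilde f(t)|^2 e^{2t\cdot x}\,dt.$$
Multiplying by $e^{-\phi(x)}$, integrating over $x$, and swapping integrals by Tonelli, the inner integral $\int e^{2t\cdot x-\phi(x)}\,dx$ is by definition $e^{\tilde\phi(t)}$. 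Hence
$$\|f\|^2_{A^2}=(2\pi)^n\int|\tilde f(t)|^2 e^{\tilde\phi(t)}\,dt=(2\pi)^n\|f\|^2_{PW}.$$
In particular all quantities are finite, retroactively justifying the use of Plancherel (in the other direction, one truncates $\tilde f$ to show the integrals make sense and passes to the limit).

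For the converse, take $f\in A^2(e^{-\phi})$ and build $\tilde f$. By Fubini, for a.e.\ $x$ in the open convex set $\{\phi<\infty\}$, the slice $f(x+i\cdot)$ lies in $L^2(\R^n_y)$; let $F(x,t)$ denote its Fourier transform in $y$. The Cauchy--Riemann equations $(\partial_{x_j}+i\partial_{y_j})f=0$ transform under the Fourier transform in $y$ into $\partial_{x_j}F=t_j F$ in the distributional sense. Consequently $F(x,t)=e^{t\cdot x}\tilde f(t)$ for a single function $\tilde f(t)$ independent of $x$. Applying the inverse Fourier transform in $t$ recovers the Laplace representation (3.1), and the Plancherel--Fubini computation of the first step yields $\|f\|^2_{A^2}=(2\pi)^n\int|\tilde f(t)|^2 e^{\tilde\phi(t)}\,dt$, which is finite by hypothesis; thus $\tilde f\in L^2(e^{\tilde\phi}\,dt)$ and $f\in PW(e^{\tilde\phi})$ with the asserted norm identity.

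The main technical obstacle is making the converse rigorous when $\phi$ assumes the value $+\infty$, so that $f$ is only defined on the tube over $\{\phi<\infty\}$. One must verify that $\partial_{x_j}F=t_j F$ holds with enough regularity to conclude that $F$ is genuinely exponential in $x$, and that the resulting $e^{t\cdot x}\tilde f(t)$ is integrable in $t$ for every $x$ in the tube base (not just a.e.). Both points can be addressed by exhausting the interior of $\{\phi<\infty\}$ by compacts on which $\phi$ is bounded, applying the standard Paley--Wiener reconstruction there, and extending the identity $f(z)=\int e^{t\cdot z}\tilde f(t)\,dt$ by holomorphicity (together with log-convexity of $t\mapsto \int e^{2t\cdot x}|\tilde f|^2\,dt$) to the full tube. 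Once this reconstruction is in place, the norm identity follows immediately from the computation above.
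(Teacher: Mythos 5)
Your first direction (Plancherel on the slices $f_x$, then Tonelli) is exactly the paper's argument, and the norm identity comes out the same way. For the converse you take a genuinely different route. The paper reduces to a single good slice $x_0$, Fourier-inverts there, and then handles only the case where $\widehat{f_{x_0}}$ has compact support (so that $\int e^{t\cdot z}\tilde f(t)\,dt$ is trivially entire and agrees with $f$ on $\Re z=x_0$ by analytic continuation); the general case is then obtained by a density argument, convolving $f$ in the imaginary directions with a scaled $\alpha(s)\,ds$ whose Fourier transform has compact support, and using that the isometric image of the (complete) Paley--Wiener space is closed. You instead Fourier-transform every slice, push the Cauchy--Riemann equations through the transform to get $\partial_{x_j}F=t_jF$ distributionally, and conclude $F(x,t)=e^{t\cdot x}\tilde f(t)$ directly. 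Your route is more structural and avoids the approximation step, but it shifts the technical burden onto (i) justifying the distributional ODE in $x$ jointly in $(x,t)$ and (ii) the pointwise integrability of $e^{t\cdot x}\tilde f(t)$ needed to write the representation (3.1) at every point of the tube --- both of which you correctly flag and which can indeed be handled as you sketch (local integrability of $F$ on compacts where $\phi$ is bounded, connectedness of the interior of $\left\{\phi<\infty\right\}$, and Cauchy--Schwarz against $\int e^{2t\cdot x-\tilde\phi(t)}\,dt$ for the integrability). The paper's mollification trick buys a cleaner rigorous argument at the cost of an extra density step; yours gives the Fourier--Laplace representation for all $f$ at once. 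Both are valid.
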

Thus we see that weighted Bergman spaces where the weight only depends on $x=\Re z$ are Paley-Wiener spaces of a special kind, namely defined by weight functions $\psi$ that are logarithmic Laplace transforms.

For the proof, we first assume that $f$  lies in $PW(e^{\tilde\phi})$.  Hence, $f$ is the Fourier-Laplace transform (3.1) of a function $\tilde f$, and
$$
\int |\tilde f|^2 e^{\tilde\phi} dt<\infty.
$$
It follows that $e^{t\cdot x}\tilde f$ lies in $L^2(\R^n)$ for any $x$ in the interior of the set where $\phi(x)<\infty$. 

Let $f_x(y)=f(x+iy)$. Then Parseval's formula gives
$$
\int |f_x|^2 dy= (2\pi)^n\int e^{2t\cdot x}|\tilde f|^2 dt.
$$
Multiplying with $e^{-\phi(x)}$ and integrating with respect to $x$ we get
$$
\int |f(x+iy)|^2 e^{-\phi(x)} dx dy=(2\pi)^n\int |\tilde f|^2 e^{\tilde\phi} dt.
$$
Thus $f$ lies in $A^2(e^{-\phi(x)})$ and the norm in $A^2$ coincides with the norm in $PW(e^{\tilde\phi})$ multiplied by $(2\pi)^n$. 

Hence, the Paley-Wiener space with weight $\tilde \phi$ is at least isometrically embedded in $A^2(e^{-\phi(x)})$, and it remains only to show that it is dense. 

For this we take $f$ in $A^2$ and note  that $f_x$ lies in $L^2$ for almost all $x$ in the interior of the set where $\phi(x)<\infty$. Take one such $x_0$. By Fourier inversion we can write
$$
f_{x_0}(y)=\int e^{t\cdot (x_0+iy)} \tilde f(t) dt,
$$
with $e^{t\cdot x_0}\tilde f$ in $L^2$. Now assume that $\tilde f$ has compact support. Then
$$
f(z)=\int e^{t\cdot z}\tilde f(t) dt,
$$
since the right hand side is an entire function and it agrees with $f$ where $\Re z=x_0$. By Parseval's formula again,
$$
\int |f(x+iy)|^2dy= (2\pi)^n\int e^{2t\cdot x } |\tilde f(t)|^2 dt,
$$
and we can multiply by $e^{-\phi(x)}$ and integrate with respect to $x$ to conclude that $f$ lies in $PW(e^{\tilde\phi})$. 

Thus, to prove that the Paley-Wiener space is dense in $A^2$ it suffices to prove that the space of $f$ in $A^2$ such that $f_x$ has compactly supported Fourier transform for some $x$, is dense. For this, let $\alpha$ be a smooth, non-negative function with compactly supported Fourier transform and total integral equal to 1. Put
$$
h(z)=\int f(z+is) \alpha(s)ds.
$$
Then $h_x$ has compactly supported Fourier transform, and since
$$
|h(z)|^2\leq \int |f(z+is)|^2\alpha(s)ds,
$$
$h$ lies still in $A^2$. Hence any such $h$ lies in the Paley-Wiener space. Finally, by scaling $\alpha$ appropriately we get an approximate identity, and therefore a sequence of functions in the Paley-Wiener space that converges to $f$. This completes the proof. 

We next give an explicit formula for the Bergman kernel of $PW(e^{\psi})$.
\begin{prop} The Bergman kernel $B(z)$ for $PW(e^{\psi})$ is
$$
B(z)=\int e^{2t\cdot x -\psi(t)} dt.
$$
In particular , the Bergman kernel for $A^2(e^{-\phi})$ is 
$$
(2\pi)^{-n}\int e^{2t\cdot x-\tilde\phi(t)} dt.
$$
 \end{prop}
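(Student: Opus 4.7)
The formula for $B(z)$ on $PW(e^{\psi})$ reduces to a clean application of Cauchy--Schwarz for the reproducing integral. For any $f(z)=\int e^{t\cdot z}\tilde f(t)\,dt$ in $PW(e^{\psi})$, write
$$
f(z)=\int\paren{e^{t\cdot z-\psi(t)/2}}\paren{\tilde f(t)e^{\psi(t)/2}}\,dt,
$$
and apply Cauchy--Schwarz. Since $t$ is real and $|e^{t\cdot z}|^2=e^{2t\cdot x}$ with $x=\Re z$, this yields
$$
|f(z)|^2\leq \paren{\int e^{2t\cdot x-\psi(t)}\,dt}\,\|f\|^2_{PW}.
$$
Hence $B(z)\leq \int e^{2t\cdot x-\psi(t)}\,dt$.

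For the opposite inequality, I would exhibit the extremal function. Equality in Cauchy--Schwarz forces $\tilde f(t)e^{\psi(t)/2}$ to be a multiple of $\overline{e^{t\cdot z-\psi(t)/2}}=e^{t\cdot \bar z-\psi(t)/2}$, so the natural candidate is $\tilde f(t)=e^{t\cdot \bar z-\psi(t)}$ (with $\tilde f=0$ where $\psi=\infty$). A direct computation then gives both
$$
f(z)=\int e^{2t\cdot x-\psi(t)}\,dt\quad\text{and}\quad \|f\|^2_{PW}=\int e^{2t\cdot x-\psi(t)}\,dt,
$$
so the ratio $|f(z)|^2/\|f\|^2_{PW}$ realizes the claimed value, proving $B(z)\geq \int e^{2t\cdot x-\psi(t)}\,dt$.

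The only point that needs care is the case $\int e^{2t\cdot x-\psi(t)}\,dt=+\infty$, where the extremal $\tilde f$ above is not in $L^2(e^{\psi})$. I would handle this by truncation: replace $\tilde f$ by $\tilde f\cdot \chi_R$ for an exhausting family of sets (e.g.\ $\{|t|\leq R\}$ intersected with $\{\psi<\infty\}$). Each truncation gives an admissible function in $PW(e^{\psi})$, and the same computation shows $|f_R(z)|^2/\|f_R\|^2_{PW}=\int_{\chi_R}e^{2t\cdot x-\psi(t)}\,dt\to +\infty$, forcing $B(z)=+\infty$ as well. I do not expect any real obstacle here.

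For the second assertion, I would simply quote Proposition 3.1: since $A^2(e^{-\phi})=PW(e^{\tilde\phi})$ as sets of functions and $\|f\|^2_{A^2}=(2\pi)^n\|f\|^2_{PW}$, the sup defining the Bergman kernel picks up a factor $(2\pi)^{-n}$ relative to the $PW(e^{\tilde\phi})$ kernel, giving
$$
B_{A^2}(z)=(2\pi)^{-n}\int e^{2t\cdot x-\tilde\phi(t)}\,dt,
$$
as claimed.
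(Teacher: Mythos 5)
Your proposal is correct and follows essentially the same route as the paper, which also derives the formula from the representation $f(z)=\int e^{t\cdot z}\tilde f(t)\,dt$ together with Cauchy's inequality (the paper simply leaves the extremal function and the norm identity $\|f\|^2_{A^2}=(2\pi)^n\|f\|^2_{PW}$ implicit). Your added details---the explicit maximizer $\tilde f(t)=e^{t\cdot\bar z-\psi(t)}$, the truncation in the divergent case, and the factor $(2\pi)^{-n}$ from Proposition 3.1---are exactly what the paper's ``follows immediately'' is compressing.
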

\begin{proof}
By the definition of the Bergman kernel we have
$$
B(x+iy)= \sup_{\tilde f}|\int e^{t\cdot (x+iy)} \tilde f(t) dt|^2,
$$
where the supremum is taken over all $\tilde f$ with
$$
\int |\tilde f|^2 e^{\psi} dt\leq 1.
$$
The first part of the proposition follows immediately from this and Cauchy's inequality. The second part follows from the first part and formula (3.4).
\end{proof}
\begin{preremark}
There is an intriguing symmetry between Propositions 3.1 and 3.2 in that the logarithmic Laplace transform appears in both places, for rather different reasons. It would be nice to have a better explanation of this than just brute computation.
\end{preremark}
Taking $\psi = I_K$, $PW(e^{-\psi})= PW(K)$ and the proposition shows that the Bergman kernel at the origin for the classical Paley-Wiener space $PW(K)$  is $|K|$. This was the starting point for Nazarov. However, one cannot work directly in $PW(K)$, since it is difficult to construct functions in that space, by methods like H\"ormander's $L^2$-estimates.  Therefore Nazarov works instead with $\psi=\tilde\phi$, where $\phi$ is an indicator function. This is a Bergman space, and H\"ormander-type methods can be applied. He uses a variant of Proposition 3.2 for the special case when $\phi=I_K$ is the indicator of a convex body. We refer to \cite{Nazarov} for a discussion of the origins of this formula.
  \section{Estimates from above of the Bergman kernel}

 Following Nazarov, we next give an estimate from above of the Bergman kernel, with the difference that we work with convex functions instead of convex bodies (see also \cite{Hultgren}). In the next theorem we take the weight to be $2\phi(x)$ instead of $\phi(x)$. This fits better with Theorem 1.1, and is also more convenient in the estimate from above.
\begin{thm} Let $B$ be the (diagonal) Bergman kernel for the space $A^2(e^{-2\phi(x)})$. Assume $\phi$ is convex and symmetric around the origin. 
Then
\be
B(0)\leq \pi^{-n}\frac{\int e^{-\phi^*}}{\int e^{-\phi}},
\ee
where $\phi^*$ is the Legendre transform of $\phi$.
\end{thm}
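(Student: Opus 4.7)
The starting point is Proposition 3.2 applied with $\phi$ replaced by $2\phi$, which gives
\[
B(0) = (2\pi)^{-n}\int_{\R^n} e^{-\widetilde{2\phi}(t)}\,dt,\qquad e^{\widetilde{2\phi}(t)} = \int e^{2xt - 2\phi(x)}\,dx.
\]
The theorem is thus equivalent to the purely convex-analytic estimate
\[
\Bigl(\int e^{-\widetilde{2\phi}(t)}\,dt\Bigr)\Bigl(\int e^{-\phi(x)}\,dx\Bigr) \le 2^n\int e^{-\phi^*(t)}\,dt,
\]
and my plan is to establish it by combining a mean-value bound for $|f(0)|^2$ with a Legendre-duality identity.

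For $f\in A^2(e^{-2\phi})$, Cauchy's integral formula on the distinguished boundary $T_r=\{|z_j|=r_j\}$ of a polydisc at the origin, followed by Cauchy--Schwarz, gives $|f(0)|^2 \le (2\pi)^{-n}\int_{T_r}|f(re^{i\theta})|^2 \prod_j d\theta_j$ for every $r\in(0,\infty)^n$. Since $\phi$ is convex and centrally symmetric, the supremum of $\phi(\Re z)$ on $T_r$ equals $\phi_P(r):=\max_{\epsilon\in\{\pm 1\}^n}\phi(\epsilon_1 r_1,\dots,\epsilon_n r_n)$, attained at a vertex of the polybox $\prod[-r_j,r_j]$ by convexity. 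Integrating this family of polydisc bounds against the weight $G(r)=\prod r_j\cdot e^{-2\phi_P(r)}$ on $(0,\infty)^n$ and converting to $\C^n$ via $d\lambda(z)=\prod r_j\,dr_j\,d\theta_j$, the factor $e^{2\phi_P}$ cancels cleanly and yields
\[
|f(0)|^2\cdot\int_{(0,\infty)^n} G(r)\,dr \le (2\pi)^{-n}\|f\|_{A^2}^2,
\]
so that after re-expressing $\int G$ in polar form, $B(0) \le \bigl[\int_{\C^n}e^{-2\phi_P(|z|)}\,d\lambda(z)\bigr]^{-1}$, with the convention $\phi_P(|z|):=\phi_P(|z_1|,\dots,|z_n|)$.

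The theorem then reduces to the Legendre-duality inequality
\[
\int_{\C^n}e^{-2\phi_P(|z|)}\,d\lambda(z)\cdot\int e^{-\phi^*(t)}\,dt \ge \pi^n\int e^{-\phi(x)}\,dx,
\]
which I would attack via polar coordinates in $\C^n$ together with the substitution $t=\nabla\phi(x)$ on the right, using Fenchel--Young's identity $\phi(x)+\phi^*(t)=x\cdot t$ on the graph of $\nabla\phi$; the factor $\pi^n$ should come out of the angular integration. Equality for $\phi(x)=|x|^2/2$ and for the indicators of the symmetric cube and the Euclidean ball is a useful sanity check. The main obstacle is that $\phi_P$ is a somewhat delicate object when $\phi$ is only centrally symmetric, since $\phi_P(r)$ can strictly exceed $\phi(r)$ in every direction; the cleanest proof is therefore likely to go via reduction to $n=1$, where central symmetry coincides with unconditionality and the inequality becomes a one-dimensional identity involving $\phi$, $\phi^{-1}$ and $\phi^*$ that is provable by integration by parts, followed by tensorisation and a slicing argument for the non-separable higher-dimensional case.
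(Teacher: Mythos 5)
Your first display is exactly the right starting point, and the equivalent formulation you write there, $\bigl(\int e^{-\widetilde{2\phi}}\bigr)\bigl(\int e^{-\phi}\bigr)\le 2^n\int e^{-\phi^*}$, is precisely what the paper establishes -- in fact pointwise: a change of variables gives $e^{\widetilde{2\phi}(t)}=2^{-n}\int e^{t\cdot x-2\phi(x/2)}\,dx$, the convexity bound $2\phi((y+\xi)/2)\le\phi(y)+\phi(\xi)$ yields $e^{\widetilde{2\phi}(t)}\ge 2^{-n}e^{t\cdot y-\phi(y)}\int e^{t\cdot\xi-\phi(\xi)}\,d\xi$, the inner integral is an even convex function of $t$ (by symmetry of $\phi$) and hence minimized at $t=0$, and taking the supremum over $y$ produces $e^{\phi^*(t)}$. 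You abandon this route and instead go through sub-mean-value inequalities on polydisc tori, and that is where the argument breaks.

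The gap is in the passage from $\phi$ to $\phi_P(r)=\max_{\epsilon\in\{\pm1\}^n}\phi(\epsilon_1r_1,\dots,\epsilon_nr_n)$. Your intermediate bound $B(0)\le\bigl[\int_{\C^n}e^{-2\phi_P(|z|)}\,d\lambda\bigr]^{-1}$ is correct, but for merely centrally symmetric (not unconditional) $\phi$ it is too lossy to imply the theorem, and the ``Legendre-duality inequality'' you reduce to is false in that generality. Take $n=2$ and $\phi=I_K$ with $K=\{|x_1+x_2|\le\varepsilon,\ |x_1-x_2|\le1\}$, a thin slab rotated by $45^\circ$. Then $\phi_P(r)=0$ only when \emph{all four} points $(\pm r_1,\pm r_2)$ lie in $K$, which forces $|r_1|,|r_2|\lesssim\varepsilon$; so $\int_{\C^2}e^{-2\phi_P(|z|)}\,d\lambda\asymp\varepsilon^4$, while $\int e^{-\phi}=|K|\asymp\varepsilon$ and $\int e^{-\phi^*}=2|K^\circ|\asymp\varepsilon^{-1}$. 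Your reduction asks for $\varepsilon^4\cdot\varepsilon^{-1}\gtrsim\varepsilon$, which fails as $\varepsilon\to0$; equivalently, your intermediate bound is of order $\varepsilon^{-4}$ whereas the theorem asserts a bound of order $\varepsilon^{-2}$. This is not a fixable detail of the ``slicing argument'' you defer to: the vertex maximum over the torus genuinely sees only the largest unconditional body inside $\{\phi<\infty\}$, which can be arbitrarily small relative to it. (In the unconditional case $\phi_P=\phi$ your target inequality does hold with equality for the cube, the ball and the Gaussian, so the approach is salvageable there, but the final inequality is still left unproved.) The fix is simply to return to your first display and prove the pointwise bound $e^{-\widetilde{2\phi}(t)}\le 2^n e^{-\phi^*(t)}/\int e^{-\phi}$ as above.
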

\begin{proof}
Let $\Phi=2\phi$. By Proposition 3.1
\be
B(0)=(2\pi)^{-n}\int e^{-\tilde\Phi},
\ee
where 
$$
e^{\tilde\Phi(t)} =\int e^{2t\cdot x-\Phi(x)} dx.
$$
Changing variables we get
$$
e^{\tilde\Phi(t)}= 2^{-n}\int e^{t\cdot x-\Phi(x/2)} dx.
$$
Let $x=y+\xi$ where $y$ is constant and $\xi$ is a new variable in the integral. By convexity, 
$$\Phi((y+\xi)/2)\leq (\Phi(y)+\Phi(\xi))/2=\phi(x)+\phi(\xi),
$$
so
$$
e^{\tilde\Phi(t)}\geq  2^{-n} e^{t\cdot y-\phi(y)}\int e^{t\cdot \xi-\phi(\xi)} d\xi .
$$
Since $\phi$ is symmetric, the gradient with respect to $t$ of the integral in the right hans side vanishes for $t=0$, so this is a minimum point for the function defined by the integral:
$$
\int e^{t\cdot \xi-\phi(\xi)} d\xi\geq \int e^{-\phi(\xi)} d\xi.
$$
Taking the supremum over all $y$ we get
$$
e^{\tilde\Phi(t)}\geq  2^{-n}e^{\phi^*(t)}\int e^{-\phi}dx.
$$
Inserting this in formula (4.2) we get (4.1).
\end{proof}

We can now combine this with Corollary 1.2 (with $\phi_1=\phi_2=\phi)$:
\begin{thm} If $\phi$ is a symmetric convex function
$$
C^{-n} \pi^n\leq \int e^{-\phi}\int e^{-\phi^*}
$$
where $C$ is given in Theorem 1.1. If $\phi$ is homogenous of order 2, we can take $C=2$.
\end{thm}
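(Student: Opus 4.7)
The plan is to simply combine Theorem 4.1 (the upper bound for the Bergman kernel) with Corollary 1.2 (the lower bound), both applied to the weight $e^{-2\phi(x)}$.

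First, I would apply Corollary 1.2 with $\phi_1=\phi_2=\phi$. This gives, for the Bergman kernel $B$ of $A^2(e^{-2\phi(x)})$ at the origin, the lower bound
$$
B(0) \geq C^{-n}\left(\int e^{-\phi}\right)^{-2},
$$
where $C$ is the constant of Theorem 1.1. Second, by Theorem 4.1, the same Bergman kernel satisfies the upper bound
$$
B(0) \leq \pi^{-n}\frac{\int e^{-\phi^*}}{\int e^{-\phi}}.
$$

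Comparing the two bounds at $z=0$ yields
$$
C^{-n}\left(\int e^{-\phi}\right)^{-2} \leq \pi^{-n}\frac{\int e^{-\phi^*}}{\int e^{-\phi}},
$$
and multiplying both sides by $\pi^n \int e^{-\phi}\cdot \int e^{-\phi}$ gives
$$
C^{-n}\pi^n \leq \int e^{-\phi}\int e^{-\phi^*},
$$
which is the claimed inequality. The homogeneous-of-order-2 case follows by using the improved constant $C=2$ from the second part of Theorem 1.1 in the same argument.

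There is really no obstacle here: once the two sides of the Bergman kernel estimate have been established (the hard work being in Theorem 1.1 and Theorem 4.1), this theorem is an immediate consequence. The only thing to be careful about is matching the weight conventions: Theorem 4.1 is formulated for the weight $e^{-2\phi}$, and this is precisely the weight obtained from Corollary 1.2 by setting $\phi_1=\phi_2=\phi$, so the two estimates can be directly combined without any rescaling of $\phi$.
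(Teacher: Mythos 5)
Your proof is correct and is exactly the paper's argument: Theorem 4.2 is stated there as the immediate combination of Corollary 1.2 (with $\phi_1=\phi_2=\phi$, giving the lower bound $B(0)\geq C^{-n}(\int e^{-\phi})^{-2}$ for the kernel of $A^2(e^{-2\phi})$) with the upper bound of Theorem 4.1. Your remark about matching the weight conventions is the only point requiring care, and you handle it correctly.
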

This estimate is not optimal. The theorem of Kuperberg, \cite{Kuperberg} suggests that one can remove the factor $C^{-n}$ , see \cite{B} for a proof of this. The function version of Mahler's conjecture says that one can take the left hand side equal to $4^{n}$. 
    \section{Convex bodies and numerical values.}

In this section 
we translate Theorem 4.2 to an estimate for volumes of convex bodies and compare the result obtained to earlier results. 

Take $\phi^*=I_K$, the indicator function of a symmetric convex body $K$. Then $\phi= h_K$,
the support function of $K$. Hence
$$
\int e^{-\phi(t)}=\int_{\R^n}dt\int_{h_K(t)}^\infty e^{-s}ds=\int_0^\infty e^{-s}\int_{h_K(t)<s}dt=
|K^\circ| \int_0^\infty s^ne^{-s} ds=n!|K^\circ|,
$$
since $sK^\circ=\{ h_K<s\}$.

Hence Theorem 4.2 gives the inequality
$$
|K||K^\circ|\geq  C^{-n}\pi^n/n!,
$$
which is worse than Kuperberg's estimate by a factor $C^{-n}$, $C=1.604$. Nazarov's estimate corresponds to $C=(4/\pi)^2$, which is roughly 1.62. Thus our estimate is better than Nazarov's, but not by much.


\begin{thebibliography}{9999}


\bibitem{B} Berndtsson, B.; Complex  integrals and  Kuperberg's proof of the Bourgain-Milman theorem. Preprint 2020.

\bibitem{Berndtsson2} Berndtsson, B.; Subharmonicity properties of the Bergman kernel and some other functions associated to pseudoconvex domains. Ann. Inst. Fourier, 56 (2006), pp. 1633-1662.

\bibitem{Blocki} Blocki, Z.: A lower bound for the Bergman kernel and the Bourgain-Milman inequality. In: Klartag B., Milman E. (eds) Geometric Aspects of Functional Analysis. Lecture Notes in Mathematics, vol 2116. Springer, 2014.


\bibitem{1Bo'az} Klartag, B.: On convex perturbations with a bounded isotropic constant. Geom. Funct. Anal. 16 (2006), no. 6, 1274-1290.

\bibitem{2Bo'az} Eldan, R. and Klartag, B.: Approximately Gaussian marginals and the hyperplane conjecture. Concentration, functional inequalities and isoperimetry, 55-68, Contemp. Math., 545, Amer. Math. Soc., Providence, RI, 2011.

\bibitem{Bourgain-Milman} Bourgain, J. and Milman, V.; New volume ratio properties for convex symmetric bodies in $\R^n$, Invent. Math. 88 (1987), pp. 319-340.

\bibitem{Hultgren} Hultgren, J. ; Nazarov's proof of the
Bourgain-Milman theorem, Master's thesis, University of Gothenburg 2012. Available at  https://sites.google.com/view/jakob-hultgren/bourgain-milman.


\bibitem{Kuperberg} Kuperberg, G.;The bottle-neck conjecture. Geometry and Topology 3 (1999), pp. 119-135.

\bibitem{Kuperberg2} Kuperberg, G.; From the Mahler conjecture to Gauss linking integrals. GAFA, geom. func. anal., 18 (2008), pp. 870-892.

\bibitem{Nazarov} Nazarov, F.;  The H\"ormander proof of the Bourgain-Milman theorem. In: Klartag, B., Mendelson, S., Milman V. (eds), Geometric aspects of functional analysis, Lecture notes in Mathematics Vol. 2050, Springer, 2012.


\end{thebibliography}
\end{document}